\DeclareMathAlphabet{\pazocal}{OMS}{zplm}{m}{n}
\theoremstyle{plain}  
\newtheorem{theorem}{Theorem}[section]
\newtheorem*{theorem*}{Theorem}
\newtheorem{lemma}[theorem]{Lemma}
\newtheorem{proposition}[theorem]{Proposition}
\theoremstyle{definition}
\newtheorem{definition}[theorem]{Definition}
\newtheorem{remark}[theorem]{Remark}
\newtheorem*{claim*}{Claim}
\numberwithin{equation}{section}
\newcommand{\R}{\mathbb{R}}
\newcommand{\C}{\mathbb{C}}
\newcommand{\g}{\mathfrak{g}}
\newcommand{\h}{\mathfrak{h}}
\newcommand{\p}{\mathfrak{psh}}
\renewcommand{\P}{{\rm Psh}}
\newcommand{\ad}{\mathrm{ad}}
\newcommand{\Aut}{\mathrm{Aut}}
\definecolor{MyBlue}{RGB}{0,0,255}
\definecolor{MyRed}{RGB}{255,0,0}
\definecolor{MyGray}{RGB}{150,60,60}
\newcommand{\be}{\begin{equation}}
	\newcommand{\ee}{\end{equation}}
\newcommand{\ben}{\begin{enumerate}}
	\newcommand{\een}{\end{enumerate}}
\newcommand{\bit}{\begin{itemize}}
	\newcommand{\eit}{\end{itemize}}
\newcommand{\edoc}{\end{document}}
\renewcommand{\tocsection}[3]{%
	\indentlabel{\@ifnotempty{#2}{\ignorespaces#1 #2\quad}}#3}
\renewcommand{\tocsubsection}[3]{%
	\indentlabel{\@ifnotempty{#2}{\ignorespaces#1 #2\quad}}#3}
\newcommand\@dotsep{4.5}
\def\@tocline#1#2#3#4#5#6#7{\relax
	\ifnum #1>\c@tocdepth 
	\else
	\par \addpenalty\@secpenalty\addvspace{#2}%
	\begingroup \hyphenpenalty\@M
	\@ifempty{#4}{%
		\@tempdima\csname r@tocindent\number#1\endcsname\relax
	}{%
		\@tempdima#4\relax
	}%
	\parindent\z@ \leftskip#3\relax \advance\leftskip\@tempdima\relax
	\rightskip\@pnumwidth plus1em \parfillskip-\@pnumwidth
	#5\leavevmode\hskip-\@tempdima{#6}\nobreak
	\leaders\hbox{$\m@th\mkern \@dotsep mu\hbox{.}\mkern \@dotsep mu$}\hfill
	\nobreak
	\hbox to\@pnumwidth{\@tocpagenum{\ifnum#1=1\fi#7}}\par
	\nobreak
	\endgroup
	\fi}
\renewcommand\csname r@tocindent0\endcsname{0pt}
\def\l@subsection{\@tocline{2}{0pt}{2.5pc}{5pc}{}}
\begin{document}
\title{The completeness problem on the pseudo-homothetic Lie group}

\author[S. Chaib]{Salah Chaib}
\address{\hspace{-5mm} Salah Chaib, Centro de Matem\'{a}tica,
	Universidade do Minho,
	Campus de Gualtar,
	4710-057 Braga,
	Portugal} 
\email {salah.chaib@cmat.uminho.pt}
\author[A.C. Ferreira]{Ana Cristina Ferreira}
\address{\hspace{-5mm} Ana Cristina Ferreira, Centro de Matem\'{a}tica,
	Universidade do Minho,
	Campus de Gualtar,
	4710-057 Braga,
	Portugal} 
\email {anaferreira@math.uminho.pt}

\author[A. Zeghib]{Abdelghani Zeghib}
\address{\hspace{-5mm} Abdelghani Zeghib, UMPA, CNRS, \'Ecole Normale Sup\'erieure de Lyon, 46, All\'ee d'Italie 69364 Lyon Cedex 07, France }
\email{abdelghani.zeghib@ens-lyon.fr}
\subjclass[2020]{Primary 53C22; Secondary 53C30, 53C50}

\date{\today}

\begin{abstract}
	Let us call pseudo-homothetic group the non-unimodular  3-dimensional Lie group that is the semi-direct product of $\R$ acting non-semisimply on $\R^2$.  In this article, we solve the geodesic completeness problem on this Lie group. In particular, we exhibit a family of complete metrics such that all geodesics have bounded velocity. As an application, we show that the set of complete metrics is not closed.
\end{abstract}

\maketitle

\vspace*{-3mm}

\tableofcontents

\section{Introduction}\label{sec:intro}
Any simply connected solvable 3-dimensional Lie group is a semi-direct product  $G_A$ of $\R^2$ by $\R$, where $A$ is a $2\times 2$ real matrix and $\R$ acts on $\R^2$, via $t \mapsto \exp t A$. Over $\C$, the matrix $A$ is always diagonalizable except in two cases, $B=\left(\begin{smallmatrix}0&1\\0&0\end{smallmatrix}\right)$ or $C=\left(\begin{smallmatrix} 1&1\\0&1\end{smallmatrix}\right)$, up to conjugacy and rescaling. Now, $G_B$ is the well-studied 3-dimensional Heisenberg group $\mathrm{Heis}$, and the group $G_C$ is the obbject of our study here. Its Lie algebra appears as type IV in the Bianchi classification and is generated  by a basis $B=\{e_1,e_2,e_3\}$   of $\R^3$ satisfying the bracket relations 
\begin{equation}\label{eq:LA-basis}
 [e_1,e_2] = e_2, \, [e_1, e_3] = e_2+e_3, \, [e_2,e_3] =0.   
\end{equation}
The conjugacy class of $C= \left(\begin{smallmatrix} 1&1\\0&1\end{smallmatrix}\right)$ accumulates to the identity matrix $I= \left(\begin{smallmatrix} 1&0\\0&1\end{smallmatrix}\right)$, and in a precise sense, 
$G_C$ accumulates to ${\rm Ho} = G_{I}$. The group ${\rm Ho}$ can be identified with the homothety group of the plane, that is,  
transformations $ z \in \C \mapsto a z + b$, $a \in \R^+, b \in \C$. Therefore, for these reasons, we shall call our Lie group $G_C$ the \emph{pseudo-homothety} group of dimension 3, and use the letters ${\rm Psh}$ for the group and $\mathfrak{psh}$ for its Lie algebra.

A matrix realization of the Lie algebra $\mathfrak{psh}$ is given by 

\begin{equation} \label{eq:LA-matrix-real}
\mathfrak{m}=\operatorname{span}\left\{E_1=\left(\begin{matrix} 0&0&0\\0&1&1\\0&0&1
\end{matrix}\right),E_2=\left(\begin{matrix} 0&0&0\\-1&0&0\\0&0&0
\end{matrix}\right),E_3=\left(\begin{matrix} 0&0&0\\-1&0&0\\-1&0&0
\end{matrix}\right)\right\},
\end{equation}
where the Lie bracket is the usual commutator of matrices. Indeed, we have $[E_1,E_2]=E_2$, $[E_1,E_3]=E_2+E_3$, and $[E_2,E_3]=0$, and, more precisely, \eqref{eq:LA-matrix-real} yields a linear representation of $\p$. Moreover, by making use of the exponential map, we obtain that the Lie group $\P$ is isomorphic to the matrix group 
\begin{equation}\label{eq:LG-matrix-real}
{\rm M}=\left\{\left(\begin{matrix}
	1&0&0\\-x_2-x_3& \mathrm{e}^{x_1}&x_1\mathrm{e}^{x_1}\\-x_3&0&\mathrm{e}^{x_1}
\end{matrix}\right) \ \hbox{ : }\, x_1,x_2,x_3\in\R        \right\},
\end{equation}
that is, ${\rm Psh}$ is isomorphic to $\mathbb{R}^3$,  with global coordinate system $(x_1,x_2,x_3)$, and multiplication given by the matrix multiplication of \eqref{eq:LG-matrix-real}. 

\smallskip

The present article aims to study left-invariant Lorentzian metrics on $\P$. This is the first part of a program aiming to understand geodesic completeness and isometry groups of left-invariant Lorentzian metrics on 3-dimensional non-unimodular Lie groups. The case of our present Lie group $\P$ seems, from the properties described below, to be of particular interest.

From the metric point of view,  ${\rm Heis}$ and ${\rm Ho}$ have antagonistic properties. For instance, any Lorentzian metric on ${\rm Heis}$ is complete \cite{Guediri-2step}, while, on the other hand, any Lorentzian metric on ${\rm Ho}$ is incomplete \cite{Guediri-solvable, VukmirovicSukilovic}. 
One might be tempted to think that our group $\P$ behaves like ${\rm Ho}$ from this point of view, that all of its Lorentzian metrics are incomplete, or at least, that complete metrics are rare.  Our main (somewhat surprising) result  is the following.

\begin{theorem}\label{(in)completeness of G} There is a one-parameter family of left-invariant Lorentzian complete metrics on $\P$  whose geodesics have bounded velocity. Furthermore, up to automorphism and scaling, there is a unique complete left-invariant Lorentzian metric on $\P$ with geodesics of unbounded velocity.
\end{theorem}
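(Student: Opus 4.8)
The plan is to set up left-invariant metrics explicitly in the basis $\{e_1,e_2,e_3\}$ and reduce the completeness question to the dynamics of the geodesic flow written in terms of the Euler–Arnold (geodesic) equations on the Lie algebra.
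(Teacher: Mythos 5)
Your proposal is only a statement of strategy, not a proof: it names the Euler--Arnold reduction (which is indeed the framework the paper uses) but contains none of the arguments that actually establish the theorem. The substantive content you would still need to supply is the following. First, a classification of the non-degenerate symmetric bilinear forms on $\p$ up to the action of $\R^*\times\Aut(\p)$ (the paper finds six normal forms, two of them one-parameter families); without this, the uniqueness clause ``up to automorphism and scaling, there is a unique complete metric with unbounded geodesics'' cannot even be addressed. Second, for the incomplete classes you need explicit incompleteness certificates: idempotents of the quadratic geodesic field for some classes, and for the classes without idempotents a reduction to a scalar ODE of the form $\dot x = x^2+\alpha(t)$ with $\alpha\ge 0$. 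Third, and most critically, the completeness proofs for the family $\mathcal{Q}_{2,s}$, $s>0$ (the bounded-velocity family) and for $\mathcal{Q}_3$ (the unbounded one) hinge on a non-obvious ingredient your plan does not anticipate: besides the energy, each geodesic field admits a second, non-polynomial, partially defined first integral (e.g.\ $\ln|y|+s\,z/y$ on $\{y\neq 0\}$ for $\mathcal{Q}_{2,s}$, and $\ln|z|-y/z$ on $\{z\neq 0\}$ for $\mathcal{Q}_3$). Combining the two level-set equations is what forces boundedness of the integral curves off the invariant plane, and hence completeness; on the invariant plane one checks completeness directly (for $\mathcal{Q}_3$ the solutions there are $(x_o,\,y_o e^{x_o t},\,0)$, which are complete but unbounded, accounting for the ``unbounded velocity'' case). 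As it stands, the proposal would not let a reader verify any part of the statement, so it has to be counted as having a genuine gap rather than as an alternative route.
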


Having special metrics is, by any means, here by their completeness and  geodesic velocity (un)boundedness, an interesting phenomenon that deserves to be highlighted. 

\smallskip

The analysis of the geodesic flow for left-invariant metrics on Lie groups reduces, via the Euler-Arnold formalism, to the study of a quadratic homogeneous vector field on its Lie algebra. We shall call this vector field the \emph{geodesic field}, see Sec. \ref{sec:preliminaries} for the definition and some background and techniques.  

In Sec. \ref{Sec:N-F}, we will show that, under the action of $\R^\ast\times \mathrm{Aut}(\p)$, there are six equivalence classes of metrics on $\p$, two of them in families. Representatives of these equivalence classes are usually called normal forms. Completeness is constant on the orbits of this group action, and it is the first step to prove our main Th. \ref{(in)completeness of G},  cf. Sec. \ref{sec:geo-compl}. The geodesic fields of all metric normal forms can be found in Table \ref{table:geodesic-fields}, where we also summarized the following interesting facts. All geodesic fields have an invariant plane and, besides the energy, they have another  non-polynomial first integral, which is  defined on the complement of the invariant plane. In the complete case, the ``hidden'' non-trivial first integral was the key to the proof by establishing certain boundedness properties. Indeed, as stated in Th. \ref{(in)completeness of G}, there is a family of metrics whose integral curves of the geodesic field are all bounded; but, also remarkably, there is another complete metric with unbounded integral curves that happen to all lie on the invariant plane. 

\smallskip 

Up to covering and quotient, out of the six 3-dimensional unimodular Lie groups, there are only two that have incomplete metrics, $\mathrm{SL}(2,\mathbb{R})$ and $\mathrm{E}(1,1)$, as shown in \cite{BrombergMedina}. As it turns out, the set of complete metrics is closed for both of these groups. It was reasonable to conjecture that this was always the case, at least for 3-dimensional Lie groups. In Sec. \ref{sec:further-remarks}, we will see that our Lie group $\P$ provides a (non-unimodular) counter-example.

\begin{proposition}
 The set of complete metrics on $\P$ is neither open nor closed.
\end{proposition}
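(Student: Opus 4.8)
\emph{Setup.} Left-invariant Lorentzian metrics on $\P$ are in bijection with Lorentzian scalar products on the Lie algebra $\p$, i.e.\ with the open subset $\mathcal{L}\subset\Sym^{2}(\p^{*})\cong\R^{6}$ of symmetric bilinear forms of Lorentzian signature, equipped with the subspace topology. Write $\mathcal{C}\subseteq\mathcal{L}$ for the subset of complete metrics. Since completeness is constant on the orbits of $G=\R^{*}\times\Aut(\p)$ (Sec.~\ref{Sec:N-F}), the set $\mathcal{C}$ is $G$-invariant, and by Theorem~\ref{(in)completeness of G} it consists, modulo $G$, of a single one-parameter family together with exactly one further orbit. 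The plan is to produce two explicit one-parameter paths of metrics in $\mathcal{L}$ crossing the boundary of $\mathcal{C}$: one running from $\mathcal{C}$ into its complement (non-closedness) and one running the opposite way, landing on a point of $\mathcal{C}$ (non-openness). In each case the delicate point is to certify (in)completeness of the limit metric directly from its geodesic field, the quadratic vector field on $\p$ recorded in Table~\ref{table:geodesic-fields}.

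\emph{$\mathcal{C}$ is not closed.} I would take the family $\{g_{s}\}$ of complete metrics with bounded-velocity geodesics furnished by Theorem~\ref{(in)completeness of G} and let $s$ tend to a limiting value $s_{\ast}$ at the edge of the parameter range. First one checks that the coefficients of $g_{s}$ converge and that the limit $g_{s_{\ast}}$ is still nondegenerate of Lorentzian signature, so that $g_{s_{\ast}}\in\mathcal{L}$; this is a direct computation with the normal form. The substantive step is to show $g_{s_{\ast}}\notin\mathcal{C}$. The geodesic field $X_{s}$ depends smoothly on the metric, hence on $s$, and converges to $X_{s_{\ast}}$, so it suffices to exhibit an integral curve of $X_{s_{\ast}}$ that escapes to infinity in \emph{finite} time. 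Restricting $X_{s_{\ast}}$ to the invariant plane reduces the problem to a planar quadratic system, and I expect the non-polynomial first integral of Table~\ref{table:geodesic-fields} to degenerate at $s_{\ast}$ into a Riccati-type equation exhibiting finite-time blow-up; the velocity bound that guaranteed completeness for $s\neq s_{\ast}$ is precisely what diverges as $s\to s_{\ast}$. This produces complete metrics $g_{s}\to g_{s_{\ast}}\in\mathcal{L}\setminus\mathcal{C}$, so $\mathcal{C}$ is not closed.

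\emph{$\mathcal{C}$ is not open.} Here I would start from a complete metric $g_{0}$ and perturb it into incomplete ones. A convenient choice is the (essentially unique) complete metric with unbounded velocity, whose geodesics all lie on the invariant plane. Choose a path $h_{t}\in\mathcal{L}$ with $h_{0}=g_{0}$ whose initial velocity leaves the $G$-orbit of $g_{0}$; for a generic such direction $h_{t}$ meets none of the finitely many complete orbits for small $t\neq0$, and it remains to verify that it is genuinely incomplete. Such a perturbation destroys the invariance of the plane, so that generic geodesics of $h_{t}$ acquire a nonzero transverse component, and I would show that for small $t\neq0$ this component satisfies a quadratic differential inequality forcing blow-up in finite time, whence $h_{t}\notin\mathcal{C}$. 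Thus $g_{0}$ is a non-interior point of $\mathcal{C}$, and $\mathcal{C}$ is not open. (A soft codimension argument via $\dim G=1+\dim\Aut(\p)$ is delicate here, since a one-parameter family of orbits need not have positive codimension in $\mathcal{L}$, so I prefer the explicit perturbation.)

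\emph{Main obstacle.} The crux in both directions is the certification of genuine incompleteness, i.e.\ the existence of an integral curve of the geodesic field reaching infinity in \emph{finite} time. This is subtle precisely because, as Theorem~\ref{(in)completeness of G} shows, unbounded velocity does \emph{not} by itself imply incompleteness; one cannot argue from divergence of the velocity alone. One must instead locate the limiting (resp.\ perturbed) metric among the incomplete normal forms and produce an explicit escaping solution, most naturally by reducing to the dynamics on the invariant plane and reading off a Riccati blow-up from the degeneration of the hidden first integral.
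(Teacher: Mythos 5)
Your plan has the right target but a genuine gap in both directions, and it misses the one idea the paper's argument actually turns on: exploiting the \emph{non-closedness of the $\R^*\times\Aut(\p)$-orbits} rather than degenerating the normal forms themselves.

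For non-closedness, you propose to push the parameter $s$ of the complete family to the edge of its range and check that the limit is still a Lorentzian metric. That check fails: the complete family is (up to the group action) $\mathcal{Q}_{2,s}$ with $s>0$, and the only finite edge value is $s=0$, where $\mathcal{Q}_{2,0}$ is degenerate and hence not in $\mathcal{L}$; as $s\to\infty$ there is no limit at all. One cannot reach an incomplete Lorentzian metric by varying $s$ inside the normal-form slice. The paper instead picks, for each $n$, a representative $B_{n,s}=\varphi_n^t\,\mathcal{Q}_{2,s}\,\varphi_n$ \emph{inside the orbit} of $\mathcal{Q}_{2,s}$ with $s=n^4>0$ (hence complete, by orbit-invariance of completeness), arranged so that $B_{n,n^4}\to\mathcal{Q}_4$. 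Since $\mathcal{Q}_4$ was already proved incomplete in Sec.~\ref{sec:geo-compl} (its null curves satisfy $\dot x=x^2+z^2$, giving blow-up by Lemma~\ref{incompleteness lemma}), no new Riccati analysis of a degenerating first integral is needed. Your ``main obstacle'' of certifying incompleteness of the limit is therefore not where the difficulty lies; the difficulty is constructing a convergent sequence that crosses between orbits of different completeness type, and that requires moving within orbits via automorphisms.

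For non-openness, your perturbation of the unbounded-velocity complete metric $\mathcal{Q}_3$ is only a hope: the assertion that a generic perturbation destroys the invariant plane and produces a quadratic differential inequality forcing blow-up is exactly the hard statement, and you give no mechanism for it (indeed, nearby metrics could a priori land in the orbits of the complete family $\mathcal{Q}_{2,s}$, whose union need not have positive codimension, as you yourself note). The paper again argues by explicit sequences in the opposite direction: the matrices $A_{n,-n^4}$ lie in the orbits of $\mathcal{Q}_{1,r}$ with $r=-n^4<0$, which are incomplete because they admit the idempotent $\left(1,0,\tfrac{1}{\sqrt{-r}}\right)$, and they converge to the complete metric $\mathcal{Q}_3$. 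So the set of incomplete metrics is not closed, i.e.\ $\mathcal{C}$ is not open. In short: replace both of your analytic limiting arguments by explicit orbit representatives converging to normal forms whose (in)completeness is already known from the classification.
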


Kundt metrics have been intensively studied in general relativity and have attracted the interest of mathematicians in recent years. We would like to observe that the study of geodesic fields provides a natural context to investigate the existence of left-invariant Kundt metrics, as explained in Sec. \ref{sec:further-remarks}. We present a concise outline of the existence of Kundt structures on $\P$ and exhibit one that is complete and another one that is a plane wave.

\section{Preliminaries}\label{sec:preliminaries}

We include here a brief account of background material, for the sake of clearness of exposition, and also to fix notation and terminology. 

\subsection{The Euler-Arnold theorem} As is well-known, left-invariant metrics on a Lie group $G$ are in one-to-one correspondence with non-degenerate symmetric bilinear forms on its Lie algebra $\g$. The Euler-Arnold formalism allows us to treat questions concerning geodesics (here understood as the geodesics of the Levi-Civita connection) also at the Lie algebra level, as follows.

Let $I$ be an open interval in $\mathbb{R}$ and $\gamma: I \longrightarrow G$ be a smooth curve in $G$. Using left translations, we can define the associated curve $v: I \longrightarrow \g$ in the Lie algebra $\g$ of $G$, for every $t\in I$, as
\begin{equation*}
v(t) = D_{\gamma(t)}L_{\gamma^{-1}(t)}\dot{\gamma}(t).
\end{equation*}

Notice that for matrix Lie groups $v(t) = \gamma^{-1}(t)\dot{\gamma}(t)$.

We have the following theorem, first proved by Euler for the group $\mathrm{SO}(3)$, and then established in full generality by Arnold in his seminal work on applications of differential geometry of Lie groups to the hydrodynamics of perfect fluids. 

\begin{theorem}[Arnold, \cite{Arnold-paper, Arnold-book}]\label{Thm: EA-equation} Let $(G, q)$ be semi-Riemannian Lie group. The curve $\gamma: I \longrightarrow G$ is a geodesic if and only if the associated curve $v: I \longrightarrow \g$ satisfies, for every $t\in I$, the equation
\begin{equation}
\dot{v}(t) =  \mathrm{ad}_{v(t)}^\dagger v(t),\label{eq: EA-equation}
\end{equation}
where $\mathrm{ad}_{v(t)}^\dagger$denotes the formal adjoint of $\mathrm{ad}_{v(t)}$ with respect to $q$.
\end{theorem}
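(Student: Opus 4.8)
The plan is to read the Euler--Arnold equation off directly from the Koszul formula for the Levi-Civita connection $\nabla$ of $q$, exploiting that $q$ is constant when evaluated on left-invariant vector fields. For left-invariant vector fields $X,Y,Z$ on $G$ the functions $q(Y,Z)$, $q(X,Z)$, $q(X,Y)$ are constant, so the three derivative terms of the Koszul identity
\[
2\,q(\nabla_X Y, Z) = X\,q(Y,Z) + Y\,q(X,Z) - Z\,q(X,Y) + q([X,Y],Z) - q([X,Z],Y) - q([Y,Z],X)
\]
vanish, leaving $2\,q(\nabla_X Y, Z) = q([X,Y],Z) - q([X,Z],Y) - q([Y,Z],X)$. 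Using the defining relation $q(\ad_X^\dagger Y, Z) = q(Y,[X,Z])$ of the formal adjoint to rewrite the last two terms as $q(\ad_X^\dagger Y, Z)$ and $q(\ad_Y^\dagger X, Z)$, and then cancelling $Z$ by non-degeneracy of $q$, I obtain the purely algebraic formula
\[
2\,\nabla_X Y = [X,Y] - \ad_X^\dagger Y - \ad_Y^\dagger X, \qquad X,Y\in\g .
\]
In particular, taking $X=Y=v$ and using $[v,v]=0$ gives $\nabla_v v = -\ad_v^\dagger v$.

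The only delicate point is the passage from this left-invariant picture back to the actual curve, since $\dot\gamma$ is not a left-invariant field. I would introduce the time-dependent left-invariant vector field $V_t$ determined by $V_t(e)=v(t)$; since $L_{\gamma(t)}\circ L_{\gamma^{-1}(t)}=\mathrm{id}$ one checks that $V_t(\gamma(t))=\dot\gamma(t)$, so $V_t$ extends the velocity along $\gamma$. Writing the covariant acceleration as
\[
\nabla_{\dot\gamma}\dot\gamma = (\partial_t V_t)(\gamma(t)) + (\nabla_{V_t} V_t)(\gamma(t)),
\]
the first summand accounts for the explicit $t$-dependence of the extension and the second for differentiating the frozen field $V_t$ in the direction $\dot\gamma(t)=V_t(\gamma(t))$. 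Translating both back to $\g$ by $D_{\gamma(t)}L_{\gamma^{-1}(t)}$: the field $\partial_t V_t$ is left-invariant with value $\dot v(t)$, while $\nabla_{V_t}V_t$ is left-invariant with value $\nabla_v v = -\ad_v^\dagger v$ by the previous step.

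Combining the two contributions, the left translate of $\nabla_{\dot\gamma}\dot\gamma$ equals $\dot v(t) - \ad_{v(t)}^\dagger v(t)$. Hence $\gamma$ is a geodesic, i.e.\ $\nabla_{\dot\gamma}\dot\gamma=0$, if and only if $\dot v(t)=\ad_{v(t)}^\dagger v(t)$, which is exactly \eqref{eq: EA-equation}. I expect the main obstacle to be the rigorous justification of the splitting of the acceleration in the second step --- equivalently, the claim that $(\nabla_{V_t}V_t)(\gamma(t))$ is the left translate of $\nabla_v v$ for the time-frozen field. This can be settled either by a direct computation in local coordinates, or more conceptually by invoking that each left translation $L_g$ is an isometry of $(G,q)$ and therefore preserves $\nabla$, so that the value of $\nabla_{V}V$ at any point is the $L$-image of its value at $e$.
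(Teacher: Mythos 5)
Your proposal is correct. Note that the paper does not actually prove this statement --- it is quoted from Arnold with references --- so there is no in-paper argument to compare against; what you give is the standard derivation: the Koszul formula on left-invariant fields yields $2\nabla_XY=[X,Y]-\ad_X^\dagger Y-\ad_Y^\dagger X$, hence $\nabla_vv=-\ad_v^\dagger v$, and the splitting of the acceleration for the time-dependent left-invariant extension $V_t$ (justified, as you say, either in coordinates or by noting that left translations are isometries and so preserve $\nabla$) gives $\dot v-\ad_v^\dagger v=0$ as the geodesic condition. Your sign conventions are consistent with the paper's later use of $\ad_v^\dagger=Q^{-1}\ad_v^tQ$ and with the computed fields in Table \ref{table:geodesic-fields}.
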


The system of ODE in \eqref{eq: EA-equation} is called the Euler-Arnold equation and its associated vector field in $\R^n$ is called the Euler-Arnold vector field. For simplicity of language, we will sometimes refer to this vector field as the geodesic field. Remark that the geodesic field is quadratic and homogeneous.

Recall that a vector field is said to be complete if all its integral curves have the real line $\R$ as maximal domain of definition, and incomplete otherwise. Clearly, a left-invariant metric on a Lie group is geodesically complete if and only if its associated Euler-Arnold vector field is complete. 

\subsection{First integrals} It is easy to see that $q(v,v)$ (sometimes referred to as the energy) is a first integral, that is, $q(v,v)$ is constant along any solution of \eqref{eq: EA-equation}. If $G$ can be equipped with a bi-invariant metric then another first integral is granted for every metric, \cite{BrombergMedina};  however, in general, there is no guarantee that another one exists.

\subsection{Idempotents} A technique that is very useful in the search for incomplete integral curves of quadratic homogeneous vector fields is that of idempotents.

\begin{definition}
Let $F$ be a quadratic homogeneous vector field on $\R^n$. A non-trivial solution of $F(v_o)=v_o$ is called an idempotent.
\end{definition}

It was proved in \cite{KaplanYorke} that for a quadratic homogeneous vector field, we can always find either a singularity (i.e. $F(v_o)=0$) or an idempotent. 

Moreover, as explained in \cite{BrombergMedina}, an idempotent $v_o$ yields an incomplete solution of the system $\dot{v}=F(v)$, since the solution with initial condition $v_o$ is given by $t\longmapsto u(t)v_o$, with $u$ such that $\dot{u}=u^2$ and $u(0)=1$.

\subsection{Incompleteness in dimension 1}

The ODE $\dot{u}=u^2$ is the typical prototype of an equation with incomplete solutions, the velocity of an integral curve grows quadratically and the curve reaches infinity in finite time. Heuristically, an ODE of the form $\dot{u}= u^2+\delta$ with $\delta>0$ should also be incomplete as the velocity grows even faster. We can formalize this statement with the following lemma. 

\begin{lemma}\label{incompleteness lemma}
	Let $(E)$ be an ordinary differential equation of the form
	$$
	\dot{x}(t)=ax^2(t)+\alpha(t),
	$$
	such that $a>0$ and $\alpha\in\mathcal{C}^\infty(\R)$; $t\mapsto\alpha(t)\geq0$. Let $\gamma\colon I\rightarrow\R$ be a nonzero maximal integral curve of $(E)$, then $\gamma$ must be incomplete.
\end{lemma}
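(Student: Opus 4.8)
The plan is to discard the nonnegative term $\alpha$ and reduce everything to the autonomous Riccati equation $\dot{y}=ay^2$, whose nonzero solutions blow up in finite time. Since $\alpha(t)\geq 0$ and $a>0$, any integral curve $\gamma$ satisfies $\dot{\gamma}(t)=a\gamma(t)^2+\alpha(t)\geq a\gamma(t)^2\geq 0$, so $\gamma$ is non-decreasing on its maximal interval $I$. As $\gamma$ is not identically zero, there is some $t_0\in I$ with $\gamma(t_0)\neq 0$, and I would split the argument according to the sign of $\gamma(t_0)$.

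First I would treat the case $\gamma(t_0)=c>0$. By monotonicity $\gamma(t)\geq c>0$ for every $t\geq t_0$ in $I$, so $\gamma$ stays bounded away from zero and I may divide the differential inequality $\dot{\gamma}\geq a\gamma^2$ by $\gamma^2$ and integrate from $t_0$ to $t$. This yields $\tfrac{1}{c}-\tfrac{1}{\gamma(t)}\geq a(t-t_0)$, and since $\gamma(t)\geq c$ forces $\tfrac{1}{\gamma(t)}>0$, we obtain $a(t-t_0)<\tfrac{1}{c}$, i.e. $t<t_0+\tfrac{1}{ac}$. Hence the right endpoint of $I$ is finite and $\gamma$ is incomplete. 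Equivalently, comparison with the explicit solution $y(t)=c/(1-ac(t-t_0))$ of the Riccati equation gives the same bound.

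The remaining case $\gamma(t_0)=-d<0$ is where the argument is most delicate, and is the step I expect to be the main obstacle: here $\gamma$ need not blow up forward, so I would instead run the estimate backward in time. By monotonicity $\gamma(t)\leq -d<0$ for all $t\leq t_0$ in $I$, so again $\gamma$ stays bounded away from zero; integrating $\dot{\gamma}\geq a\gamma^2$ from $t$ to $t_0$ now gives $\tfrac{1}{d}+\tfrac{1}{\gamma(t)}\geq a(t_0-t)$ with $\tfrac{1}{\gamma(t)}\in[-\tfrac{1}{d},0)$, whence $a(t_0-t)<\tfrac{1}{d}$ and $t>t_0-\tfrac{1}{ad}$. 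Thus the left endpoint of $I$ is finite and $\gamma$ is again incomplete. In either case $I\neq\R$, proving the claim. The crucial point underlying both cases, and the reason the hypothesis $\alpha\geq 0$ cannot be dropped, is that it guarantees the monotonicity of $\gamma$, which is precisely what keeps the solution away from the zero of $\gamma^2$ and legitimizes the separation-of-variables estimate.
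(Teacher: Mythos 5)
Your proof is correct, and it takes a genuinely different (and more direct) route than the paper's. The paper argues by contradiction: assuming $I=\R$, it first rules out bounded solutions (a bounded non-decreasing $\gamma$ would have horizontal asymptotes, forcing $\dot\gamma\to 0$ and hence $\gamma\equiv 0$), and then, for unbounded solutions, computes the time to reach infinity via $\int_{x_0}^{+\infty}\frac{dx}{ax^2+\alpha}\leq\int_{x_0}^{+\infty}\frac{dx}{ax^2}=\frac{1}{ax_0}<\infty$, handling the "not lower-bounded'' possibility with a without-loss-of-generality symmetry. You instead run the standard Riccati comparison $\dot\gamma\geq a\gamma^2$ directly, splitting on the sign of $\gamma(t_0)$: forward integration when $\gamma(t_0)>0$ bounds the right endpoint of $I$ by $t_0+\frac{1}{ac}$, and backward integration when $\gamma(t_0)<0$ bounds the left endpoint below by $t_0-\frac{1}{ad}$; your computations in both cases check out, and the monotonicity of $\gamma$ correctly keeps $\gamma$ away from zero on the relevant half of $I$, so the division by $\gamma^2$ is legitimate. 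What your approach buys is a quantitative bound on the length of the maximal interval and no need for the bounded/unbounded dichotomy; your explicit backward-in-time case is essentially the unpacked version of the paper's WLOG step. Both proofs ultimately rest on the same two facts: $\alpha\geq 0$ gives the one-sided comparison with $\dot y=ay^2$, and nonzero solutions of the latter blow up in finite time.
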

\begin{proof}
	Suppose, aiming at a contradiction,  that $I=\R$. We start by assuming that $\gamma$ is bounded, i.e. there exists $M_1, M_2\in\R$ such that $M_1\leq\gamma(t)\leq M_2$.
	Since $\dot{\gamma}(t)=\gamma(t)^2+\alpha(t)\geq0$ then $\gamma$ is non-decreasing, and thus $\gamma$ has two horizontal asymptotes
	$$
	\lim\limits_{t\to -\infty}\gamma(t)=c_1 \ ,\ \lim\limits_{t\to +\infty}\gamma(t)=c_2, \qquad c_1,c_2\in \R,
	$$
	which, in turn, implies that $\lim\limits_{t\to \pm\infty}\dot{\gamma}(t)=0$. Hence, $\gamma\equiv0$ which contradicts the fact that $\gamma$ is nonzero maximal curve, therefore, $\gamma$ cannot be bounded. 
	Without loss of generality, we suppose that $\gamma$ is not upper-bounded, and we estimate the time it takes for $\gamma$ to tend from some $x_0=\gamma(t_0)>0$ to $+\infty$	
	$$
	\int\limits_{t_0}^{+\infty}dt=\int\limits_{x_0}^{+\infty}\frac{1}{\left(\frac{dx}{dt}\right)}dx=\int\limits_{x_0}^{+\infty}\frac{dx}{ax^2+\alpha}\leq\int\limits_{x_0}^{+\infty}\frac{dx}{ax^2}=\frac{1}{ax_0}<+\infty.
	$$
	Thus, $\gamma$ tends to infinity in finite time and is, therefore, an incomplete integral curve of $(E)$.
\end{proof}

\subsection{Action of the automorphism group}
Let $\mathrm{Sym}(\g)$ be the space of symmetric bilinear forms on $\g$ and $\mathrm{{Sym}^\ast(\g)}$ the subset of all non-degenerate ones. The automorphism group of the Lie algebra $\g$, 
\begin{equation*}
    \mathrm{Aut}(\g)=\{\varphi \in \mathrm{GL}(\g):~ [\varphi u, \varphi u] = \varphi[u,v]\, \,\, \forall u,v\in \g\}
\end{equation*}
acts on $\mathrm{Sym}^\ast(\g)$, as follows. Any $\varphi \in \mathrm{Aut}(\g)$ induces a map
$$\begin{array}{l}
     \mathrm{Sym}(\g) \longrightarrow \mathrm{Sym}(\g), \qquad m\longmapsto \varphi.m  \\
     \text{where } (\varphi.m)(u,v) = m(\varphi^{-1}u, \varphi^{-1}v), \quad \forall u, v \in \g, 
\end{array}$$
which naturally restricts to a map $\mathrm{Sym}^\ast(\g) \longrightarrow \mathrm{Sym}^\ast(\g)$.

Not too surprisingly, completeness of the flow of the geodesic field is invariant under rescaling and under the action of the automorphism group. The first statement is clear, the geodesic field remains unchanged by rescaling. The second was proved, for instance, in \cite{ElshafeiFerreiraSanchezZeghib}. Concretely, all semi-Riemannian metrics in each orbit of $\mathrm{Sym}^\ast(\g)$ by the action of $\mathrm{Aut}(\g)$ are either complete or incomplete. 

It is of interest to show that idempotents are also invariant under this action. More precisely, we have the following.

\begin{lemma}\label{lemma:idemp-aut}
    Let $m$ and $n$ be two elements in $\mathrm{Sym}^\ast(\g)$ such that $n=\varphi.m$ for some $\varphi \in \mathrm{Aut}(\g)$. Then $x_o$ is an idempotent of the geodesic field of $m$ if and only if $\varphi(x_o)$ is an idempotent of the geodesic field of $n$.      
\end{lemma}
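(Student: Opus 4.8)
The plan is to reduce the statement to a single equivariance property of the geodesic fields and then read off the claim. Write $F_m$ and $F_n$ for the geodesic fields of $m$ and $n$, so that, by the Euler-Arnold equation \eqref{eq: EA-equation}, $F_m(x) = \ad_x^\dagger x$ where the formal adjoint is taken with respect to $m$. Unwinding the definition of that adjoint, $F_m(x)$ is the unique vector characterized by
\[
m\bigl(F_m(x),\, u\bigr) = m\bigl(x,\, [x,u]\bigr) \qquad \text{for all } u \in \g,
\]
and similarly for $F_n$ with $m$ replaced by $n$; here uniqueness uses that $m$ (resp.\ $n$) is non-degenerate. Since an idempotent of $F_m$ is precisely a nonzero $x_o$ with $F_m(x_o) = x_o$ and $\varphi$ is a linear isomorphism, the whole lemma follows at once from the equivariance relation
\[
F_n \circ \varphi = \varphi \circ F_m,
\]
because then $F_m(x_o) = x_o$ holds if and only if $\varphi\bigl(F_m(x_o)\bigr) = \varphi(x_o)$, i.e.\ if and only if $F_n\bigl(\varphi(x_o)\bigr) = \varphi(x_o)$.

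To establish the equivariance, I would fix $x \in \g$ and test the vector $\varphi(F_m(x))$ against an arbitrary $u \in \g$ using the defining identity of $F_n$. From $n = \varphi.m$, that is $n(\cdot,\cdot) = m(\varphi^{-1}\cdot, \varphi^{-1}\cdot)$, one computes
\[
n\bigl(\varphi(F_m(x)),\, u\bigr) = m\bigl(F_m(x),\, \varphi^{-1}u\bigr) = m\bigl(x,\, [x, \varphi^{-1}u]\bigr),
\]
where the last equality is the characterizing identity for $F_m$ applied to the vector $\varphi^{-1}u$. On the other hand, the characterizing identity for $F_n$ at $\varphi(x)$ together with $n = \varphi.m$ gives
\[
n\bigl(F_n(\varphi x),\, u\bigr) = n\bigl(\varphi x,\, [\varphi x, u]\bigr) = m\bigl(x,\, \varphi^{-1}[\varphi x, u]\bigr).
\]
The two right-hand sides agree precisely because $\varphi$ is a Lie algebra automorphism: writing $u = \varphi(\varphi^{-1}u)$ and using $\varphi[a,b] = [\varphi a, \varphi b]$ yields $\varphi^{-1}[\varphi x, u] = [x, \varphi^{-1}u]$. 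Hence $n(\varphi(F_m(x)), u) = n(F_n(\varphi x), u)$ for every $u$, and the non-degeneracy of $n$ forces $\varphi(F_m(x)) = F_n(\varphi x)$, which is the desired equivariance.

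The only subtle point — and the step where the hypothesis $\varphi \in \Aut(\g)$ is genuinely used rather than mere linearity — is the commutation of $\varphi$ with the bracket in the last computation; everything else is bookkeeping of the transformation law $n = \varphi.m$ together with the non-degeneracy needed to pass from the tested identities back to an identity of vectors. I expect no real obstacle beyond keeping the placement of $\varphi$ and $\varphi^{-1}$ consistent throughout.
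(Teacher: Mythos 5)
Your proof is correct and rests on the same computation as the paper's: transporting the defining identity $m(F_m(x),u)=m(x,[x,u])$ through $n=\varphi.m$ and using that $\varphi$ intertwines the brackets, then invoking non-degeneracy. The only difference is cosmetic --- you package the argument as the full equivariance $F_n\circ\varphi=\varphi\circ F_m$ of the geodesic fields and read off the idempotent statement as a corollary, whereas the paper runs the identical manipulation directly at the idempotent $x_o$.
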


\begin{proof}
Let $n=\varphi.m$ and let $\dagger_m$ and $\dagger_n$ denote the formal adjoints with respect to $m$ and $n$. Suppose that the geodesic field of $m$ has an idempotent $x_o$, i.e., $\ad_{x_o}^{\dagger_m}x_o=x_o$. Then $m(x_o, \ad_{x_o}y) = m(x_o,y)$, for all $y\in \g$, and so,  $n(\varphi(x_o), \varphi(\ad_{x_o} y))=n(\varphi(x_o), \varphi(y))$. Since $\varphi$ is an automorphism of $\g$, then $n(\varphi(x_o), \ad_{\varphi(x_o)}\varphi(y))=n(\varphi(x_o), \varphi(y))$ and, therefore, we have $n(\ad_{\varphi(x_o)}\varphi(x_0), \varphi(y)) = n(\varphi(x_o), \varphi(y))$. Hence, $\ad_{\varphi(x_o)}^{\dagger_n}\varphi(x_o) =\varphi(x_o)$. The converse is clear, since $n=\varphi^{-1}.m$.
\end{proof}

Representatives of the orbits of $\mathrm{Sym}^\ast(\g)$ under the action of $\R^*\times\mathrm{Aut}(\g)$, where $\R^*$ acts as scaling, are usually called metric normal forms.

\section{Normal forms of left-invariant metrics on {\P}}\label{Sec:N-F}

The classification of normal forms of left-invariant metrics in dimension 3 has been considered more or less implicitly in several articles, for instance \cite{BoucettaChakkar-m, HaLee23}. We include some details here for our Lie algebra $\p$, for clearness of exposition and illustration of the method. 

The automorphism group $\mathrm{Aut}(\p)$ can be obtained by direct computation using the definition and our preferred basis $B$ in \eqref{eq:LA-basis} as the matrix group
\begin{equation*}
 \mathrm{Aut}(\p) =   \left\{
        \begin{pmatrix}
            1 & 0 & 0 \\
            a & c & d \\
            b & 0 & c
        \end{pmatrix}: \, a,b,c,d \in \mathbb{R}, c \neq 0
    \right\}.
\end{equation*}
As can be seen, $\mathrm{Aut}(\p)$ is 4-dimensional and has two connected components. Consider the generic $3\times 3$ matrix  
\begin{equation*}
m= \begin{pmatrix}
    m_1 & m_2 & m_3 \\
    m_2 & m_4 & m_5 \\
    m_3 & m_5 & m_6
\end{pmatrix},
\end{equation*}
which is assumed to represent a non-degenerate symmetric bilinear form in the basis $B$. The image of $m$ under the automorphism $\varphi^{-1} = \left(\begin{smallmatrix}
    1 & 0 & 0 \\
    a & c & d \\
    b & 0 & c
\end{smallmatrix}\right)$ is given by $\varphi^tm\varphi$ where $t$ denotes the matrix transpose.

Observe that the restriction of $m$ to the derived subalgebra $\mathfrak{d}$ of $\p$ is transformed only by the
the subgroup $\{\varphi \in \mathrm{Aut}(\p): a = b = 0\}$.
Moreover, the (non)degeneracy on $\mathfrak{d}=\mathrm{span}\{e_2,e_3\}$ and  whether $e_2$ is isotropic or not are both preserved by $\Aut(\p)$.
We thus have two cases to consider, which will include subcases. 

Case 1: $m|_\mathfrak{d}$ is non-degenerate i.e. $m_4m_6 -m_5^2 \neq 0$.

Subcase 1.1: $e_2$ is non-isotropic i.e. $m_4 \neq 0$.

We have two possibilities here, which depend on the signs of both $m_4$ and the chosen scale (which in turn depends on the second and third leading principal minors).

\begin{center}

$\mathcal{Q}_{1,r} = \begin{pmatrix}
    1 & 0 & 0\\
    0 & 1 & 0 \\
    0 & 0 & r
    \end{pmatrix}$, \quad with $r \neq 0$ \qquad and \qquad  $\mathcal{Q}_{2,s} = \begin{pmatrix}
    1 & 0 & 0\\
    0 & -1 & 0 \\
    0 & 0 & s
    \end{pmatrix}$, \quad with $s \neq 0$.
\end{center}
Subcase 1.2: $e_2$ is isotropic i.e. $m_4 = 0$.

We also have two possibilities here, which depend on the signs of both $m_5$ and the chosen scale (which in turn depends on the second and third leading principal minors).

\begin{center}
$\mathcal{Q}_3 = \begin{pmatrix}
    1 & 0 & 0\\
    0 & 0 & 1 \\
    0 & 1 & 0
    \end{pmatrix}$ \qquad and \qquad  $\mathcal{Q}_4 = \begin{pmatrix}
    1 & 0 & 0\\
    0 & 0 & -1 \\
    0 & -1 & 0
    \end{pmatrix}$.
\end{center}

Case 2: $m|_\mathfrak{d}$ is degenerate i.e. $m_4m_6 -m_5^2 = 0$.

Case 2.1: $e_2$ is non-isotropic i.e. $m_4 \neq 0$.

\begin{center}
$\mathcal{Q}_5 = \begin{pmatrix}
    0 & 0 & 1\\
    0 & 1 & 0 \\
    1 & 0 & 0
    \end{pmatrix}$.
\end{center}

Case 2.2: $e_2$ is isotropic i.e. $m_4 = 0$.

\begin{center}
$\mathcal{Q}_6 = \begin{pmatrix}
    0 & 1 & 0\\
    1 & 0 & 0 \\
    0 & 0 & 1
    \end{pmatrix}$.
\end{center}

We remark that every Riemannian metric belongs to the orbit of $\mathcal{Q}_{1,r}$ for some $r>0$. 

\section{Euler-Arnold vector field of left-invariant metrics on {\P} }

For each of the normal forms in Sec. \ref{Sec:N-F}, we exhibit its corresponding geodesic field as well as some extra properties.

\subsection{The geodesic vector field}

Let $v(t)=x(t)e_1+y(t)e_2+z(t)e_3$ be a curve on $\p$ equipped with a quadratic form $q$. The geodesic system of ODEs is then 
$$\dot{v}= \ad_v^\dagger v,$$ which can be readily computed by using the fact that $\ad_v^\dagger = Q^{-1}\ad_v^tQ$, where $Q$ is the matrix of $q$ and the superscript $t$ represents the matrix transpose.

For instance for $\mathcal{Q}_3$, we can easily compute that the geodesic field is given by the following system of ODEs
\begin{equation*}
\mathcal{F}_3=\begin{cases}
	\dot{x}=-2yz-z^2\\
	\dot{y}=x(y+z)\\ 
	\dot{z}=xz
\end{cases}.     
\end{equation*}
Similar computations will allow us to obtain the geodesic field for every normal form of Sec. \ref{Sec:N-F}, see Table \ref{table:geodesic-fields}.

\subsection{First integrals}

As expected, the energy $e(x,y,z) = x^2+2xz$ is a quadratic first integral of $\mathcal{F}_3$. However, no other quadratic first integrals exist. This can be shown by direct computation, by parametrizing all possible polynomials of degree at most 2 in the variables $x,y,z$. Nevertheless, a non-quadratic partially defined first integral can be found. 

\begin{proposition}
	In the subspace of $\p$ given by $\{z\neq0\}$, the following expression is an invariant of the geodesic field of $\mathcal{Q}_3$
	$$
	f(x,y,z)=\ln|z|-\frac{y}{z}.
	$$
	In other words, $f$ is a first integral of the geodesic field  $\mathcal{F}_3$ restricted to $\{z\neq0\}$ which is invariant since $\{z=0\}$ is. 
\end{proposition}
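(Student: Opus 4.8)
The plan is to exhibit $f$ as a difference of two functions each of which has the same (simple) derivative along the flow, so that the difference is automatically constant. Before anything, I would record that the restriction to $\{z \neq 0\}$ is well-posed: the third equation is $\dot z = xz$, so $\{z = 0\}$ is invariant and any orbit meeting $\{z \neq 0\}$ remains there, where $f$ is smooth. This is what lets us speak of $f$ as a first integral of the \emph{restricted} field.

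The key observation I would then use is that along any integral curve of $\mathcal{F}_3$ both $\ln|z|$ and $y/z$ have derivative exactly $x$. For the first, $\frac{d}{dt}\ln|z| = \dot z / z = x$ directly from $\dot z = xz$. For the second, by the quotient rule,
$$\frac{d}{dt}\frac{y}{z} = \frac{\dot y}{z} - \frac{y\dot z}{z^2} = \frac{x(y+z)}{z} - \frac{y \cdot xz}{z^2} = \frac{xy}{z} + x - \frac{xy}{z} = x.$$
Subtracting the two expressions gives $\dot f = \frac{d}{dt}\bigl(\ln|z| - y/z\bigr) = x - x = 0$ on $\{z \neq 0\}$, which is precisely the assertion that $f$ is a first integral of the restricted field. (Equivalently, one can verify this by the plain chain rule: since $\partial_x f = 0$, the equation for $\dot x$ never enters, and the two surviving terms $-\frac{1}{z}\,x(y+z)$ and $\bigl(\frac{1}{z} + \frac{y}{z^2}\bigr)xz$ expand to $-xy/z - x$ and $x + xy/z$, which cancel.)

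There is no genuine analytic obstacle here; the entire content is the one-line computation above, and the only point deserving care is the well-definedness remark on the invariance of $\{z = 0\}$. It is worth emphasizing, though, why such an integral escapes the earlier search: $f$ is genuinely non-polynomial, so it lies outside the finite-dimensional family of candidate quadratic expressions that one parametrizes in order to show that no further polynomial first integral exists. The decomposition above also explains how $f$ is found in the first place — one notices that $\ln|z|$ and $y/z$ have equal derivative along the flow, so their difference must be constant.
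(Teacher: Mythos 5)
Your proof is correct and follows essentially the same route as the paper: both verify that $\{z=0\}$ is invariant (via $\dot z = xz$) and then compute $\frac{d}{dt}f = \frac{\dot z}{z} - \frac{\dot y z - \dot z y}{z^2} = x - x = 0$; your presentation as two separate derivatives each equal to $x$ is only a cosmetic repackaging of the same calculation.
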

\begin{proof} Clearly, $\{z=0\}$ is an invariant plane of $\mathcal{F}_3$. It suffices to   show that the total time derivative of $f$ is zero on $\{z\neq0\}$: 
		$$
		\frac{d}{dt}f=\frac{\dot{z}(t)}{z(t)}-\frac{\dot{y}(t)z(t)-\dot{z}(t)y(t)}{z(t)^2}=x(t)-x(t)=0.
		$$
		The proposition, thus, follows.
	\end{proof}

Interestingly, this property is not exclusive of $\mathcal{Q}_3$. Analogous computations will show that all normal forms have an invariant plane and a non-quadratic partially defined first integral on its invariant complement, see Table \ref{table:geodesic-fields}.

\subsection{Normal forms}

The following table organizes the information discussed in the previous two subsections for all metric normal forms of $\p$.
{\small
\begin{table}[h]
\begin{tabular}{*5c} \toprule  
    & \quad bilinear form \quad  &  \quad geodesic field \quad &  invariant plane   &  \quad first integrals \quad \\ \toprule 
 $\mathcal{Q}_{1,r\neq 0}$  &  $\begin{pmatrix}
     1 & 0 & 0 \\
     0 & 1 & 0 \\ 0 & 0 & r \end{pmatrix}$ & $\begin{cases}
	\dot{x}=-y^2-rz^2-yz\\
	\dot{y}=xy\\ 
	\dot{z}=xz+\frac{1}{r}xy
\end{cases}$ & $y=0$ & \begin{tabular}{c}
     $x^2+ y^2 +rz^2$  \\ \smallskip 
     $\ln |y| - r\dfrac{z}{y}$ 
\end{tabular} \\ \midrule  
      $\mathcal{Q}_{2,s\neq 0}$  &  $\begin{pmatrix}
     1 & 0 & 0 \\
     0 & -1 & 0 \\ 0 & 0 & s \end{pmatrix}$ & $\begin{cases}
	\dot{x}=y^2-sz^2+yz\\
	\dot{y}=xy\\ 
	\dot{z}=xz-\frac{1}{s}xy
\end{cases}$ & $y=0$ & \begin{tabular}{c}
     $x^2- y^2 +sz^2$  \\ \smallskip 
     $\ln |y| + s\dfrac{z}{y}$ 
\end{tabular} \\ \midrule  
 $\mathcal{Q}_{3}$  &  $\begin{pmatrix}
     1 & 0 & 0 \\
     0 & 0 & 1 \\ 0 & 1 & 0 \end{pmatrix}$ & $\begin{cases}
	\dot{x}=-2yz-z^2\\
	\dot{y}=x(y+z)\\ 
	\dot{z}=xz
\end{cases}$ & $z=0$ & \begin{tabular}{c}
     $x^2+ 2yz$  \\ \smallskip 
     $\ln |z| - \dfrac{y}{z}$ 
\end{tabular} \\ \midrule  
 $\mathcal{Q}_{4}$  &  $\begin{pmatrix}
     1 & 0 & 0 \\
     0 & 0 & -1 \\ 0 & -1 & 0 \end{pmatrix}$ & $\begin{cases}
	\dot{x}=2yz+z^2\\
	\dot{y}=x(y+z)\\ 
	\dot{z}=xz
\end{cases}$ & $z=0$ & \begin{tabular}{c}
     $x^2-2yz$  \\ \smallskip 
     $\ln |z| - \dfrac{y}{z}$ 
\end{tabular} \\ \midrule  
 $\mathcal{Q}_{5}$  &  $\begin{pmatrix}
     0 & 0 & 1 \\
     0 & 1 & 0 \\ 1 & 0 & 0 \end{pmatrix}$ & $\begin{cases}
	\dot{x}=x^2+xy\\
	\dot{y}=xy\\ 
	\dot{z}=-y^2-(x+y)z
\end{cases}$ & $y=0$ & \begin{tabular}{c}
     $y^2 +2xz$  \\ \smallskip 
     $\ln |y| - \dfrac{x}{y}$ 
\end{tabular} \\ \midrule  
 $\mathcal{Q}_{6}$  &  $\begin{pmatrix}
     0 & 1 & 0 \\
     1 & 0 & 0 \\ 0 & 0 & 1 \end{pmatrix}$ & $\begin{cases}
	\dot{x}=x^2\\
	\dot{y}=-x(y+z)-z^2\\ 
	\dot{z}=x(x+z)
\end{cases}$ & $x=0$ & \begin{tabular}{c}
     $z^2+2xy$  \\ \smallskip 
     $\ln |x| - \dfrac{z}{x}$ 
\end{tabular} \\ \bottomrule 
\end{tabular}   \medskip
\caption{Normal forms of geodesic vector fields on $\p$ and their first integrals.}\label{table:geodesic-fields}
\end{table}
}

\section{Geodesic (in)completeness of {\P}}\label{sec:geo-compl}

The aim of this section is to give the classification of geodesic completeness for all left-invariant metrics on $\P$. As previously discussed in Sec. \ref{sec:preliminaries}, it suffices (although this is by no means a trivial matter) to analyze the completeness of the flow for each of the geodesic vector fields in Table \ref{table:geodesic-fields}.

In what follows, we will denote by $\mathcal{F}_k$ the geodesic vector field associated to the bilinear, symmetric, nondegenerate form $\mathcal{Q}_k$, for every possible subscript $k$ listed in Sec. \ref{Sec:N-F}.

\subsection{Incomplete metrics}

It was shown in \cite{BrombergMedina} that for a  3-dimensional  unimodular Lie algebra, the Euler-Arnold vector field of a Lorentzian metric is incomplete if and only if it admits an idempotent; however, a counter-example in the non-unimodular case was given for a Lie algebra of Bianchi type VI. Our Lie algebra $\p$, while having some of its geodesics fields with idempotents, also provides such counter-examples. It is important to observe here that if a metric has no idempotents, then no other idempotents can exist in the same orbit, cf. Lemma \ref{lemma:idemp-aut}.

\subsubsection{Incomplete metrics with idempotents} 

\begin{enumerate}
    \item[]  $\mathcal{Q}_{1,r}$ with $r<0$: 
    $v_o= \left(1, 0, \frac{1}{\sqrt{-r}}\right)$ is an idempotent of $\mathcal{F}_{1,r}$, for $r<0$.
    \item[] $\mathcal{Q}_{2,s}$ with $s<0$: $v_o= \left(1, 0, \frac{1}{\sqrt{-s}}\right)$ is an idempotent of $\mathcal{F}_{2,s}$, for $s<0$.
    \item[] $\mathcal{Q}_5$: $v_0=(1,0,0)$ is an idempotent of $\mathcal{F}_5$. 
\end{enumerate}

\subsubsection{Incomplete metrics with no idempotents}

\begin{enumerate}
    \item[] $\mathcal{Q}_4$: The null integral curves of $\mathcal{F}_4$ satisfy the equation $2yz=x^2$. Replacing this on the first equation of $\mathcal{F}_4$ we get the equation $\dot{x}=x^2+z^2$. By Lemma \ref{incompleteness lemma}, incomplete integral curve exist.
    \item[] $\mathcal{Q}_6$: One of the ODEs of $\mathcal{F}_6$ is $\dot{x}=x^2$. Then, incomplete integral curves exist.
\end{enumerate}

\subsection{Complete metrics}

\subsubsection{Completeness of the family of metrics $\mathcal{Q}_{2,s}$ with $s>0$} \hfill 

Let $s>0$ and denote by $q_s$ the quadratic Lorentzian form on $\p$ associated to $\mathcal{Q}_{2,s}$, i.e. $q_s(x,y,z)=x^2-y^2+sz^2$.  
By inspecting the geodesic field $\mathcal{F}_{2,s}$, in Table \ref{table:geodesic-fields}, we see that the plane $\{y=0\}$ is an invariant spacelike plane. The curves with initial condition $(x_o, y_o, z_o)$, with $y_o=0$, satisfy the equation $x^2+sz^2=c_o$, where $c_o=x_o^2+sz_o^2 \geq 0$.  Thus, such integral curves are bounded and are, therefore, complete.

Observe that the geodesic field $\mathcal{F}_{2,s}$ is invariant under the the involution $(\mathrm{id}, -\mathrm{id}, -\mathrm{id})$. This means that if $\gamma(t)=(x(t),y(t),z(t))$ is the maximal integral curve with initial conditions $(x_o,y_o,z_o)$, then $\Tilde{\gamma}(t)=(x(t), -y(t), -z(t))$ is the maximal integral  curve with initial conditions $(x_o, -y_o, -z_o)$. Therefore, it is enough to analyze the behavior of the integral curves in the upper-half space $\{y>0\}$. 

Recall, from Table \ref{table:geodesic-fields}, that $\mathcal{F}_{2,s}$ has another non-quadratic first integral, defined for $y>0$ by $h_s(x,y,z)= \ln(y)+s \tfrac{z}{y}$. Therefore, any integral curve $\gamma(t)=(x(t),y(t),z(t))$ of $\mathcal{F}_{2,s}$ with $y(0)=y_o>0$ will be supported in the intersection of two level sets of $q_s$ and $h_s$, that is 
\begin{equation*}
    \begin{cases}
    x(t)^2-y(t)^2+sz(t)^2 = k \\
    \ln y(t)+ s \tfrac{z(t)}{y(t)} = c
    \end{cases}, \qquad \text{ where } k,c\in \R. 
\end{equation*}
From the first equation above, we see that $x(t)^2+sz(t)^2=k+y(t)^2$, which implies that $x(t)$ and $z(t)$ will be bounded when $y(t)$ is. Also, since $x(t)^2=k+y(t)^2-sz(t)^2$, then $sz(t)^2-y(t)^2 \leq k$. From the second equation, $z(t) = \frac{1}{s}(c-\ln y(t))y(t)$. Therefore,
$$\frac{y(t)^2}{s}((c-\ln y(t))^2-s) \leq k.$$
Since $\frac{y^2}{s}((c-\ln y)^2-s)$ tends to $+\infty$ when $y$ tends to $+\infty$, we conclude that $y(t)$ is bounded, otherwise we obtain a contradiction with the inequality above. 

Summing up, the integral curves of $\mathcal{F}_{2,s}$ are bounded which yields completeness of the metric $\mathcal{Q}_{2,s}$, $s>0$.

\subsubsection{Completeness of the metric $\mathcal{Q}_3$} \hfill 

The analysis of this case is very similar to the previous one, with the main difference that there are unbounded (complete) integral curves of the geodesic field $\mathcal{F}_3$.

It can be readily checked that $\{z=0\}$ is a lightlike (i.e. degenerate) invariant plane and that the maximal solution of $\mathcal{F}_3$ with initial condition $(x_o, y_o, 0)$ is given by $\gamma(t) = (x_o, y_o\mathrm{exp}(x_ot), 0)$. These curves are complete and unbounded. The involution $(\mathrm{id}, -\mathrm{id}, -\mathrm{id})$ leaves the geodesic field invariant and, therefore, it suffices to analyse the upper-half space $\{z>0\}$.  

From Table \ref{table:geodesic-fields}, we see that  $q(x,y,z)=x^2+2yz$ and $h(x,y,z) = \ln(z) - \tfrac{y}{z}$ are two first integrals of $\mathcal{F}_3$. Therefore, any integral curve $\gamma(t)=(x(t),y(t),z(t))$ of $\mathcal{F}_{3}$ with $z(0)=z_o>0$ will be supported in the intersection of two level sets of $q$ and $h$, that is 
\begin{equation*}
    \begin{cases}
    x(t)^2+2y(t)z(t) = k \\
    \ln z(t) - \tfrac{y(t)}{z(t)} = c
    \end{cases}, \qquad \text{ where } k,c\in \R. 
\end{equation*}
We will now show, as in the previous case, that these two first integrals imply the boundedness of the integral curves in $\{z>0\}$.  Let $\gamma(t)=(x(t), y(t), z(t))$ and suppose that $z(t)$ is bounded. Since $y(t)=z(t)(\ln z(t) - c)$, then $y(t)$ is also bounded (remark that even if $z(t)$ approaches zero, $y(t)$ remains bounded since $\lim_{z\rightarrow 0} z \ln z =0$). Also, $x(t)^2= k-2y(t)z(t)$, thus $x(t)$ is also bounded since $y(t)$ and $z(t)$ are. It remains then to show that $z(t)$ is necessarily bounded. We have that $2y(t)z(t)\leq k$ and thus $2z(t)^2(\ln z(t) -c) \leq k$.  This inequality implies that $z(t)$ is bounded. The proof that $\mathcal{Q}_3$ is complete follows.

\subsubsection{A dynamical study of the geodesic field of $\mathcal{Q}_3$} \hfill 

We wish to include in our discussion on the flow of the ODE system $\mathcal{F}_3 \colon \dot{v}=F(v)$ the following observations on its dynamics.

The vector field $\mathcal{F}_3$ admits three singular directions, which correspond to the zeros of $F$. 
We denote them as $w_1=(1,0,0)$, $w_2=(0,1,0)$, and $w_3=\left(0,-\tfrac{1}{2},1\right)$. 
This means that, for any scalar $\mu\in\R$, $F(\mu\, w_i)=0$ for all $i\in\{1,2,3\}$.

The eigenvalues of $D_{\lambda w_i}F$, with $i\in\{1,2,3\}$ and $\lambda\in\R^*$, provide key insights into the dynamics of the geodesic field  $\mathcal{F}_3$. Specifically:

\begin{enumerate}
	\item For $\alpha>0$, the singularity $\alpha W_1$ has one zero eigenvalue and two equal eigenvalues $\alpha$. This implies that the singularity is repelling.
	Thus, this singularity corresponds to time $-\infty$ for the corresponding integral curves in the invariant plane $\{z=0\}$ and in the associated spacelike level $\{x^2+2yz=\alpha^2\}$.
	
	\item For $\alpha<0$, the singularity $\alpha w_1$ again has one zero eigenvalue and two equal eigenvalues $\alpha$, meaning that it is attracting.
	Therefore, this singularity corresponds to time $+\infty$ for the corresponding integral curves in the invariant plane $\{z=0\}$ and the associated spacelike level $\{x^2+2yz=\alpha^2\}$.
	
	\item For any $\beta\in\R$, the singularity $\beta w_2$ has all eigenvalues equal to zero. 
	As a result, the vector field is parallel to the line given by $w_2$, which is the $y$-axis.
	
	\item For any $\delta\in\R^*$, the singularity $\delta w_3$ has one zero eigenvalue, and the remaining two are $\pm\delta \sqrt{2}i$, causing an elliptic behavior (see Figure \ref{fig:vf-singular-line}).
	This elliptic nature of the linearisation of $F$ at this singularity explains why timelike integral curves are periodic and rotate around the corresponding singularity.
\end{enumerate}

\begin{figure}[h]
	\centering
	\includegraphics[width=130mm]{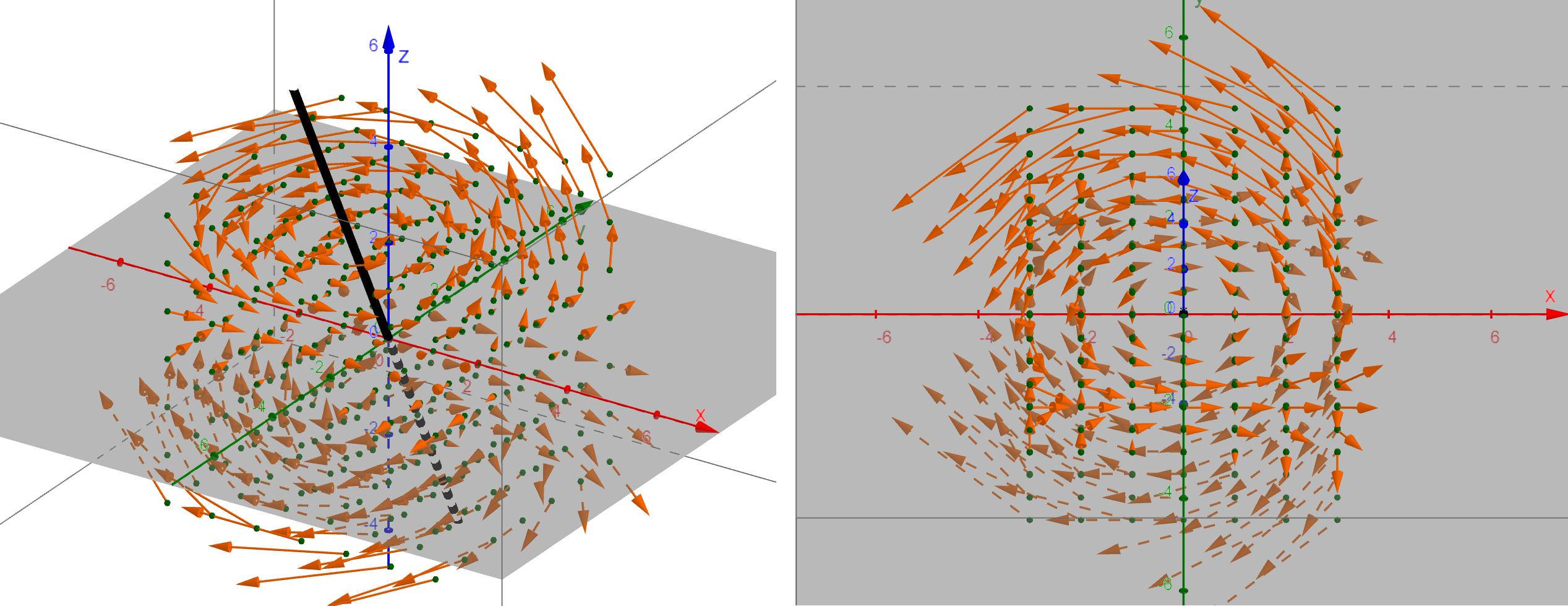}
	\caption{{\small The geodesic vector field $\mathcal{F}_3$ qualitatively rotates around the singular line (shown in black) defined by the singularity $w_3$, with the direction determined the sign of the $z$-coordinate.}}
	\label{fig:vf-singular-line}
\end{figure}

\begin{remark}
	Notably, $D_\cdot F$ has always a zero eigenvalue for every singularity. This zero eigenvalue corresponds to the normal direction of the surfaces defined by the corresponding energy level set at the singularity in question.
\end{remark}

\section{Further remarks}\label{sec:further-remarks}

\subsection{(Non-)closedness of complete metrics}

We know, from \cite{BrombergMedina}, that there are, up to covering and quotient, only two 3-dimensional unimodular Lie groups with incomplete metrics. They are $\mathrm{SL}(2,\mathbb{R})$, the special linear group of degree 2, and $\mathrm{E}(1,1)$ the group of motions of Minkowski 2-space, also known as $\mathrm{Sol}$. Careful reading of the information in \cite[Props. 3 and 5]{BrombergMedina} allows us to conclude that the set of complete metrics in both $\mathrm{SL}(2,\R)$ and $\mathrm{E}(1,1)$ is closed. See also \cite{ElshafeiFerreiraReis} for a detailed description on $\mathrm{SL}(2,\R)$.

We will now show that this is not the case for our Lie group $\P$. In fact, the set of complete metrics is neither closed nor open. Fix $r\neq 0$, for every $n\in \mathbb{N}$, the matrix 
\begin{equation*}
    A_{n,r}= \begin{pmatrix}
        1 & 0 & 0 \\
        0 & \frac{1}{n^2} & 1 \\
        0 &  1 & \frac{r}{n^2}+n^2
    \end{pmatrix}
\end{equation*}
is an element in the in the orbit $\mathcal{Q}_{1,r}$. Taking $r=-n^4 <0$, the sequence $A_{n,-n^4}$ converges to $\mathcal{Q}_3$ as $n$ tends to $+\infty$. Hence, we have a sequence of incomplete metrics converging to a complete one, showing that the setting of incomplete metrics is not closed, and thus, the set of complete metrics is not open. Now, fix $s\neq 0$, for every $n\in \mathbb{N}$, the matrix 
\begin{equation*}
    B_{n,s}= \begin{pmatrix}
        1 & 0 & 0 \\
        0 & -\frac{1}{n^2} & -1 \\
        0 & - 1 & \frac{s}{n^2}-n^2
    \end{pmatrix}
\end{equation*}
is an element in the orbit $\mathcal{Q}_{2,s}$. Taking $s=n^4 >0$, the sequence $B_{n,n^2}$ converges to $\mathcal{Q}_4$ as $n$ tends to $+\infty$. Therefore, we have a sequence of complete metrics converging to an incomplete one, showing that the set of complete metrics is not closed. 

\subsection{Kundt metrics} A Lorentzian manifold $(M,g)$ is said to be a \emph{Kundt spacetime} if there exists
a non-singular vector field $V$ on $M$ and a differential one-form $\alpha$ such that
\begin{equation}
    g(V,V) = 0, \quad \nabla_X V=\alpha(X)V, \quad \nabla_V V = 0,\label{eq:kundt}
\end{equation}
for any vector field $X$ orthogonal to $V$. In 
 \cite{BoucettaMelianiZeghib}, the following definition was introduced in order to provide an algebraic characterization of the Kundt property for left-invariant structures.

 \begin{definition}
 Let $\g$ be a Lie algebra. A \emph{Kundt pair} on $\g$ is a pair $(\langle -, - \rangle, \h)$, where $\langle -, - \rangle$ is a Lorentzian inner product on $\g$ and $\h$ is a degenerate codimension one subalgebra which is stable by the Levi-Civita product $\bullet$ and such that for any $e \in \h^\perp$, $e\bullet e =0$.
 \end{definition}

In \cite[Prop. 3.1]{BoucettaMelianiZeghib}, it was, indeed, proved that a Lie group whose Lie algebra has a Kundt pair is a Kundt Lie group, that is, a Lie group with a left-invariant Lorentzian metric and a left-invariant vector field $V$ satisfying the definition of a Kundt spacetime, cf. \eqref{eq:kundt}.

 As mentioned in Sec.\,\ref{sec:intro}, the study of geodesic fields provides a natural context to investigate the existence of Kundt metrics, as it is not difficult to see that having a degenerate subalgebra which is stable by the Levi-Civita product is equivalent to having a degenerate subalgebra which is an invariant plane for the corresponding geodesic field.  

A quick check of Table \ref{table:geodesic-fields}   shows that, if $\mathfrak{f}=\mathrm{span}\{e_1,e_2\}$ and $\mathfrak{d}=\mathrm{span}\{e_2,e_3\}$, then we have the following Kundt pairs  $(\mathcal{Q}_3,\mathfrak{f})$, $(\mathcal{Q}_4, \mathfrak{f})$, $(\mathcal{Q}_5, \mathfrak{d})$, $(\mathcal{Q}_6, \mathfrak{d})$. Notice that $(\mathcal{Q}_3,\mathfrak{f})$ is a complete Kundt structure on $\p$.

We remark that $\mathcal{Q}_6$ is flat and, in particular, it is a plane wave. In the global coordinates given by \eqref{eq:LG-matrix-real},  a frame of left-invariant vector fields is given by $\{X_1,X_2,X_3\}$, with $X_1 = \partial_{x_1}$, $X_2=\mathrm{e}^{x_1}\partial_{x_2}$, and $X_3=\mathrm{e}^{x_1}(x_1\partial_{x_2}+\partial_{x_3})$. Thus, our left-invariant metric $\mathcal{Q}_6$ is expressed on $\P$, in this coordinate system, as
$${g} = \mathrm{e}^{2x1}dx_3^2 + \mathrm{e}^{x_1}(dx_1dx_2+x_1dx_1dx_3).$$
In the paper \cite{CFZ-isometry}, another part of our program mentioned in Sec. \ref{sec:intro}, we show that this is the only Lorentzian metric on $\P$  with 4-dimensional isometry group.

\section*{Acknowledgments}  

{\small
\begingroup
\sloppy

The authors would like to thank Souheib Allout, Lilia Mehidi, and Henrik Winther for useful discussions and valuable comments. Thanks are also due to Mauro Mantegazza for help with a typesetting issue.

The first and second named authors acknowledge the support of CMAT (Centro de Matem\'atica da Universidade do Minho). Their research was financed by Portuguese Funds through FCT (Fundação para a Ciência e a Tecnologia, I.P.) within the Projects UIDB/00013/2020, UIDP/00013/2020, UID/00013: Centro de Matemática da Universidade do Minho (CMAT/UM) and also within the doctoral grant UI/BD/154255/2022 of the first named author. 

 The first named author also acknowledges partial support from LABEX MILYON (ANR-10-LABX-0070) of Université de Lyon, within the framework of the “France 2030” program (ANR-11-IDEX-0007), managed by the French National Research Agency (ANR).

\endgroup
}



\end{document}